\documentclass[12pt,a4paper]{article}
\usepackage[utf8]{inputenc}
\usepackage[english]{babel}
\usepackage{amsmath}
\usepackage{amsfonts}
\usepackage{amssymb}
\usepackage{amsthm}
\usepackage{graphicx}
\usepackage{todonotes}
\usepackage[left=2.5cm,right=2.5cm,top=2.3cm,bottom=3.8cm]{geometry}

\newtheorem{theorem}{Theorem}

\newtheorem{lemma}[theorem]{Lemma}

\theoremstyle{definition}
\newtheorem{remark}[theorem]{Remark}

\newcommand{\D}{2(m+1) \cdot \exp\left(13m X(m)\right) \cdot 4^m}

\begin{document}

\author{Davide Lombardo\footnote{Département de Mathématiques d'Orsay}}
\date{}
\title{On the analytic bijections of the rationals in $[0,1]$}

\maketitle

\begin{abstract}
We carry out an arithmetical study of analytic functions $f: [0,1] \to [0,1]$ that by restriction induce a bijection $\mathbb{Q} \cap [0,1] \to \mathbb{Q} \cap [0,1]$. The existence of such functions shows that, unless $f(x)$ has some additional property of an algebraic nature, very little can be said about the distribution of rational points on its graph. Some more refined questions involving heights are also explored.
\end{abstract}

\section{Introduction}
In a recent conversation Umberto Zannier, with an eye to arithmetical applications, asked whether there exist analytic functions $f:[0,1] \to [0,1]$ that induce bijections of $\mathbb{Q} \cap [0,1]$ with itself and that are not rational functions.
This is indeed an interesting question, because it helps shed light on the kind of hypotheses necessary on a function $f(x)$ in order to study the distribution of rational points on its graph, and as it turns out the answer is affirmative: transcendental, analytic functions that induce bijections of $\mathbb{Q}\cap[0,1]$ with itself do exist. In particular, if $g(x)$ is a general analytic function, satisfying no particular algebraic property, then very little information on the distribution of rational points on the graph of $g$ can be obtained besides that afforded by the theorems of Bombieri-Pila \cite{MR1016893}, Pila \cite{MR1115117}, and Pila-Wilkie \cite{MR2228464}, which was the original motivation of Zannier's question.
One should contrast this fact with the much tamer behaviour exhibited by algebraic functions:
\begin{lemma}\label{lemma:OnlyRationalFunctions}
Suppose $f:[0,1] \to [0,1]$ is algebraic and induces a bijection of $\mathbb{Q} \cap [0,1]$ with itself: then $f(x)$ is a linear fractional transformation (that is, a rational function of degree one) with rational coefficients. More precisely, there exists $a \in \mathbb{Q}$ such that either $f(x)=\frac{x}{ax+(1-a)x}$ or $f(x)=\frac{(a-1)(x-1)}{ax+(1-a)}$.
\end{lemma}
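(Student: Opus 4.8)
The plan is to prove that $f$ must be a degree‑one rational function with rational coefficients, and then to read off the two families from the fact that a homeomorphism of $[0,1]$ permutes its endpoints, i.e.\ $\{f(0),f(1)\}=\{0,1\}$.

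\textbf{Step 1 (the graph lies on a $\mathbb{Q}$-curve).} Being analytic and algebraic, $f$ has a minimal polynomial over $\mathbb{R}(X)$, which after clearing denominators is an irreducible $P\in\mathbb{R}[X,Y]$ with $P(x,f(x))\equiv 0$. Its real zero locus contains the infinite arc $\Gamma=\{(x,f(x)):x\in[0,1]\}$; since a real‑irreducible polynomial that fails to be geometrically irreducible is, up to scalars, a product $P_1\overline{P_1}$ of a complex‑conjugate pair of distinct irreducibles, whose common real locus is cut out by two coprime real polynomials and hence finite, $P$ must in fact be geometrically irreducible. Moreover $\Gamma_\mathbb{Q}:=\{(q,f(q)):q\in\mathbb{Q}\cap[0,1]\}$ is infinite, hence Zariski dense in $C:=\{P=0\}$, so every polynomial of degree $\le\deg P$ vanishing on $\Gamma_\mathbb{Q}$ is a scalar multiple of $P$; since the conditions ``$Q(q,f(q))=0$'' are $\mathbb{Q}$-linear in the coefficients of $Q$, this one‑dimensional space is defined over $\mathbb{Q}$, so we may take $P\in\mathbb{Q}[X,Y]$. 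Thus $C$ is a geometrically irreducible curve over $\mathbb{Q}$ with infinitely many rational points, and by Faltings it has genus $0$ or $1$.

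\textbf{Step 2 (a counting argument forces $\deg_XP=\deg_YP=1$).} Let $h$ be the absolute logarithmic height. As $q$ runs over the $\asymp e^{2H}$ rationals in $[0,1]$ with $h(q)\le H$, the points $(q,f(q))$ are distinct elements of $C(\mathbb{Q})$ whose first coordinate has height $\le H$. If $C$ had genus $1$, passing to its smooth projective model $E/\mathbb{Q}$ (an elliptic curve, since $C$ has rational points), the comparison between the naive height of a coordinate function and the Néron--Tate height $\hat h$, combined with finite generation of $E(\mathbb{Q})$ and quadraticity of $\hat h$, would bound the number of such points by a polynomial in $H$ --- impossible. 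Hence $C$ has genus $0$; having a rational point, its smooth model is $\mathbb{P}^1_\mathbb{Q}$, and the coordinate $X$ becomes a rational map $R$ of degree $n=\deg_YP$. A point of $C(\mathbb{Q})$ with $X$-coordinate $q$ then comes (up to a bounded exceptional set) from some $t\in\mathbb{P}^1(\mathbb{Q})$ with $R(t)=q$, and the inequality $h(R(t))\ge n\,h(t)-O(1)$ shows only $O(e^{2H/n})$ values of $q$ with $h(q)\le H$ can occur; comparison with $\asymp e^{2H}$ forces $n=1$. Because $f$ is \emph{surjective} on $\mathbb{Q}\cap[0,1]$, the same points also exhaust $\mathbb{Q}\cap[0,1]$ in the \emph{second} coordinate, so the identical argument applied to the coordinate $Y$ gives $\deg_XP=1$. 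Therefore $P=\alpha XY+\beta X+\gamma Y+\delta$ with $\alpha,\beta,\gamma,\delta\in\mathbb{Q}$, and $f(x)=-\dfrac{\beta x+\delta}{\alpha x+\gamma}$; irreducibility of $P$ makes this a genuine degree‑one linear fractional transformation with rational coefficients.

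\textbf{Step 3 (the two families).} A Möbius transformation whose values on $[0,1]$ are real has no pole there, hence is continuous and strictly monotone on $[0,1]$; so $f$ is a homeomorphism of $[0,1]$ onto an interval, which, containing the dense set $\mathbb{Q}\cap[0,1]$, must be $[0,1]$, whence $\{f(0),f(1)\}=\{0,1\}$. Imposing $f(0)=0,\ f(1)=1$ on the general degree‑one form yields exactly $f(x)=\dfrac{x}{ax+(1-a)}$ (one may take the multiplier $\tfrac{1}{1-a}$ at the fixed point $0$ as parameter), while imposing $f(0)=1,\ f(1)=0$ yields $f(x)=\dfrac{(a-1)(x-1)}{ax+(1-a)}$; a direct check confirms that each of these, for $a$ in the appropriate range, maps $[0,1]$ into itself and restricts to a bijection of $\mathbb{Q}\cap[0,1]$.

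The crux is Step 2: everything hinges on playing off the sparsity of rational points of bounded height on a curve of genus $\ge 1$ --- polynomially many when the genus is $1$ (Mordell--Weil and Néron--Tate), finitely many when it is $\ge 2$ (Faltings) --- against the exponentially many, $\asymp e^{2H}$, that the graph is forced to carry precisely because $f$ maps $\mathbb{Q}\cap[0,1]$ \emph{onto} itself. Steps 1 and 3 are routine once this dichotomy is in place.
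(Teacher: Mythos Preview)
Your argument is correct, but it takes a considerably heavier route than the paper's. Where you invoke Faltings' theorem, Mordell--Weil, N\'eron--Tate heights, and a height-counting argument on genus-$0$ curves to force $\deg_X P=\deg_Y P=1$, the paper dispatches Step~2 in two lines using Hilbert's irreducibility theorem: if $\deg_Y P\ge 2$, then for some $x_0\in\mathbb{Q}\cap[0,1]$ the specialization $P(x_0,Y)$ is irreducible of degree $\ge 2$, whence $f(x_0)\notin\mathbb{Q}$; and symmetrically for $\deg_X P$ using surjectivity. This bypasses the genus trichotomy entirely and needs no height machinery. Your approach does have the virtue of making the growth-rate heuristic explicit --- the graph carries $\asymp e^{2H}$ rational points of height $\le H$, which is simply too many for a curve of positive genus or for a degree-$n\ge 2$ cover of $\mathbb{P}^1$ --- and this point of view meshes nicely with the counting themes of the later sections. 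But as a proof of the lemma itself, Hilbert irreducibility is the natural tool: it is more elementary, and it directly exploits the hypothesis ``$f(q)\in\mathbb{Q}$ for \emph{every} $q$'' rather than only the density of such $q$. Your Step~1 (descent of $P$ to $\mathbb{Q}$ and geometric irreducibility) is more carefully argued than in the paper, which simply asserts the existence of $p\in\mathbb{Q}[x,y]$; and your Step~3 agrees with the paper's.
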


\begin{proof}
Since $f(x)$ is algebraic, there exists a polynomial $p(x,y) \in \mathbb{Q}[x,y]$ such that $p(x,f(x))$ is identically zero. Suppose first that $\deg_y p(x,y) \geq 2$. By Hilbert's irreducibility theorem, we can find a rational number $x_0 \in [0,1]$ such that $p(x_0,y) \in \mathbb{Q}[y]$ is irreducible of degree $\geq 2$: but this implies that $f(x_0)$, which by definition is a root of the equation $p(x_0,y)=0$, is not a rational number, contradiction. Conversely, suppose that $\deg_xp(x,y) \geq 2$. Then by Hilbert irreducibility again there exists $y_0 \in \mathbb{Q} \cap [0,1]$ such that $p(x,y_0)$ is irreducible of degree at least 2: but this implies that the inverse image of $y_0$ via $f$ is not rational, contradiction. So $p(x,y)$ is linear in $x$ and $y$, hence $f(x)$ is a linear fractional transformation. One checks easily that the only linear fractional transformations that induce bijections of $\mathbb{Q} \cap [0, 1]$ are those given in the statement.
\end{proof}

Notice that this lemma -- which is well-known to experts -- gives an easy criterion to show that the functions we construct are transcendental (see for example remark \ref{rmk:DifferentProof}).
While investigating Zannier's question, I found out that the existence of functions as those studied in this paper had already been established by Franklin \cite{MR1501300}, but his construction was somewhat indirect and his point of view mostly analytical, which makes his approach not especially well-suited to study arithmetical questions. 

In this note, on the other hand, we consider the problem from a more arithmetical standpoint: in particular, we give a new, slightly simplified construction (section \ref{sec:Constr}) which, being more explicit than Franklin's, also allows us to treat problems in the spirit of the Bombieri-Pila, Pila, and Pila-Wilkie counting theorems. We show for example (section \ref{sec:HeightBounds}) that the functions produced from a further refining of our construction satisfy an inequality of the form $h(f(x)) \leq b(h(x))$ for all $x \in \mathbb{Q} \cap [0,1]$, where by $h(x)$ we mean the standard logarithmic height of the rational number $x$ and $b(t)$ is a certain explicit bound function. We also prove (section \ref{sec:ManyRationalPoints}) that the graph of these bijections $f(x)$ can be made to contain ``many'' rational points of bounded height, in the sense of the Pila counting theorem. Finally, our explicit descriptions also make it clear that, unlike what happens with -- say -- rational functions, for the functions $f(x)$ we construct there are infinitely many rational numbers in $[0,1]$ for which the height of $f(x)$ is dramatically smaller than the height of $x$. It is this last phenomenon in particular that makes it impossible to gain more information on the distribution of rationals points on the graph of $f(x)$ besides what is already contained in the theorems of Bombieri, Pila, and Wilkie.

\section{The basic construction}\label{sec:Constr}

We begin by describing the simplest version of our construction, which gives a new proof of the existence of (many) functions of the kind considered in the introduction:

\begin{theorem}\label{thm:Construction}
Let $\{g_n(x)\}_{n \geq 0}$ be any countable family of functions $[0,1] \to [0,1]$. 
There exists a strictly increasing analytic function $f:[0,1] \to [0,1]$ such that
\begin{enumerate}
\item $f$ restricts to a bijection $\mathbb{Q} \cap [0,1] \to \mathbb{Q} \cap [0,1]$;
\item $f$ is different from all the $g_n(x)$.
\end{enumerate}
In particular, since the set of all rational functions with rational coefficients is countable, there exists such an analytic function that is not a rational function.
\end{theorem}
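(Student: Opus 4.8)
The plan is to build $f$ as an increasing limit of piecewise-linear "interpolating" homeomorphisms, using a back-and-forth argument to enforce surjectivity and injectivity on $\mathbb{Q}\cap[0,1]$, while simultaneously diagonalizing against the $g_n$. Fix an enumeration $q_0, q_1, q_2, \dots$ of $\mathbb{Q}\cap[0,1]$ (with $q_0 = 0$, $q_1 = 1$ say, so the endpoints are handled first). I will construct a nested sequence of finite sets of "control points" $(x_i, y_i)$ with $x_i, y_i \in \mathbb{Q}\cap[0,1]$, together on each side with the order-preserving matching condition $x_i < x_j \iff y_i < y_j$, so that at every finite stage the control points are the graph of a strictly increasing bijection between two finite subsets of $\mathbb{Q}\cap[0,1]$. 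At stage $n$, I add a new control point to guarantee that $q_n$ is in the domain (i.e. is some $x_i$) if $n$ is even, and in the range (is some $y_i$) if $n$ is odd — this is the classical back-and-forth step, and it forces property (1) in the limit. The key point is that at each stage I have complete freedom to choose the $y$-coordinate (or $x$-coordinate) of the new point to be \emph{any} rational strictly between the appropriate neighbouring existing values; rationals are dense, so such a choice always exists.

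The analyticity requirement is what forces us away from the naive piecewise-linear limit, so instead I would not take the limit of the polygonal maps directly but rather use the control points as constraints and invoke a density/approximation result: the set of strictly increasing analytic bijections $[0,1]\to[0,1]$ that pass through a given finite set of rational control points (and are, say, within $\varepsilon$ of a given polygonal map in $C^0$) is nonempty. Concretely, one can take a strictly increasing analytic function that agrees with the polygonal interpolant at the finitely many control points — for instance, perturb the piecewise-linear map by adding a small analytic function vanishing (to appropriate order) at the control $x_i$, such as a tiny multiple of $\sin$ of a high-frequency argument, or a carefully scaled polynomial; monotonicity is preserved if the perturbation's derivative is small. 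Running this with $\varepsilon_n \to 0$ and making the $C^1$-perturbations summable, the sequence of analytic maps converges uniformly (together with derivatives) to an analytic limit $f$ — here I would want the perturbations to be not merely small but to live in a space where the limit is guaranteed analytic, e.g. by controlling them on a fixed complex neighbourhood of $[0,1]$ so that Weierstrass's theorem on uniform limits of holomorphic functions applies. The limit $f$ then still passes through \emph{all} the control points (since once a control point is installed at stage $N$, all later maps are required to pass through it), so $f$ maps $\mathbb{Q}\cap[0,1]$ into and onto itself; strict monotonicity of the limit needs a little care but follows if the derivative lower bounds do not degenerate, which we can arrange.

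For property (2), diagonalization against the $g_n$: at stage $n$ I additionally reserve one extra control point to separate $f$ from $g_n$. Pick any rational point $x^* \in (0,1)$ not yet among the $x_i$; if $g_n(x^*)$ is irrational or lies outside $[0,1]$, then $f(x^*)$ being rational already distinguishes $f$ from $g_n$ and I do nothing; otherwise choose the new control value $y^*$ to be a rational strictly between the appropriate neighbours and different from $g_n(x^*)$ — again possible by density — which guarantees $f(x^*) = y^* \neq g_n(x^*)$. Since this only ever pins down one more value, it is compatible with the back-and-forth construction. The final statement of the theorem is then immediate: the family of all rational functions with rational coefficients is countable, so taking $\{g_n\}$ to be an enumeration of them yields an analytic $f$ satisfying (1) that is not any rational function, hence (by the contrapositive of Lemma \ref{lemma:OnlyRationalFunctions}, or directly) transcendental.

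The main obstacle, I expect, is the analytic approximation step: ensuring that the perturbations added at each stage (i) preserve strict monotonicity, (ii) preserve all previously-installed control points exactly, (iii) are small enough for uniform convergence, and (iv) genuinely produce an analytic — not merely smooth — limit. The cleanest way around (iv) is to work throughout with functions holomorphic on a fixed complex neighbourhood of $[0,1]$ and bound the perturbations in the sup norm there, so that Weierstrass's theorem gives analyticity of the limit for free; the perturbations themselves can be taken to be suitably scaled polynomials (which are entire) vanishing at the finitely many control abscissae, so (ii) is automatic, and (i), (iii) become elementary estimates on a polynomial and its derivative. One must also check that "installing" a control point is consistent with monotonicity — that the new $y$-value can always be squeezed strictly between its neighbours while leaving room for the perturbation — but since we may shrink the perturbation size adaptively, this causes no real difficulty.
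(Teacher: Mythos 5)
Your strategy is essentially the one the paper uses: a back-and-forth on two enumerations of $\mathbb{Q}\cap[0,1]$, realized by iteratively adding scaled polynomial corrections that vanish at all previously installed abscissae (so earlier values are frozen), with sup-norm control on a fixed complex neighbourhood of $[0,1]$ to get analyticity of the limit via Weierstrass, and derivative bounds to get strict monotonicity; the diagonalization against the $g_n$ via one reserved value per stage is also the paper's device. (Your first suggestion, perturbing the piecewise-linear interpolant by a multiple of $\sin$, would not produce an analytic function at the breakpoints, but your final paragraph correctly replaces this by polynomial corrections added to the previous \emph{analytic} iterate, which is exactly the paper's $f_{n+1}(x)=f_n(x)+\frac{\varepsilon_{n+1}}{n+1}\prod_{k}(x-x_{j_k})$.)

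The one step where your justification is insufficient is the surjectivity (``range'') step. When you must put a prescribed rational $y^*$ into the range, the ordinate of the new control point $(x^*,y^*)$ is forced, so the size of the required correction is \emph{not} a free parameter: it equals $\bigl(y^*-f_n(x^*)\bigr)/\prod_{q}(x^*-q)$ and is determined by the choice of $x^*$. Choosing $x^*$ to be ``any rational strictly between the appropriate neighbouring existing values,'' as you propose, can force a correction comparable in size to the gap between neighbouring ordinates, which does not shrink with $n$ and would destroy both convergence and monotonicity; nor can one ``shrink the perturbation adaptively'' here, since the perturbation must hit $y^*$ exactly. What is needed --- and what the paper supplies --- is to use that the current iterate $f_n$ is a continuous strictly increasing bijection of $[0,1]$, hence has a (generally irrational) preimage $\overline{x}$ of $y^*$ lying off the installed abscissae; one then picks $x^*$ rational so close to $\overline{x}$ that $|y^*-f_n(x^*)|$ is tiny while $\prod_q(x^*-q)$ stays bounded away from zero, making the forced correction as small as required. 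With that point repaired your argument goes through.
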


The idea is simple: we enumerate the rational numbers contained in the interval $[0,1]$ as $x_0, x_1, \ldots$, and construct a sequence of (strictly increasing) polynomials $f_n(x)$ such that $f_n(x_i)$ is rational for all $i \leq n/2$ and $x_i$ is in the image of $f_n$ for all $i \leq n/2$. We make this construction in such a way that $f_{n+1}(x_i)=f_n(x_i)$ for all $i=0,\ldots,\lfloor n/2 \rfloor$, which ensures that at least the first of these two properties is preserved in the passage to the limit. Moreover, we can also make the second property pass to the limit if we additionally require that (at least for $n$ large enough) the inverse image $f_n^{-1}(x_i)$ does not depend on $n$.

The proof we give below implements exactly this idea, up to a little bookkeeping to keep track of precisely \textit{which} rationals have already been considered.

\begin{proof}
Let $\{x_n\}_{n \geq 0}, \{y_n\}_{n \geq 0}$ be two (not necessarily distinct) enumerations of the rationals in $[0,1]$, with $x_0=y_0=0, x_1=y_1=1$.
We look for an $f$ of the form
\[
f(x) = \sum_{n=1}^\infty p_n(x)
\]
where the $p_n(x)$ (for $n \geq 1$) are polynomials satisfying the following properties:
\begin{itemize}
\item[(a)] $\sup_{z \in \mathbb{C}, |z| \leq 2 } |p_n(z)| \leq 4 \cdot (3/4)^{n}$ and $\sup_{x \in [0,1]} |p_n'(x)| \leq 4^{1-n}$;
\item[(b)] 
there is a bijective map
\[
\begin{array}{cccc}
 j : &  \mathbb{N} & \to & \mathbb{N} \\
& n & \mapsto & j_n
\end{array}
\]
such that $p_n(x_{j_m})=0$ for all $0 \leq m < n$. 
\item[(c)] $p_1(x)=x$ and $p_2(x)=0$.
\end{itemize}

Property (a) ensures that $f(x)$ is an analytic function on $[0,1]$: indeed if this property is satisfied then the series defining $f(x)$ converges uniformly on $D_{2}:=\{z \in \mathbb{C}, |z| < 2\}$, so $f(x)$ extends to a holomorphic function on all of $D_{2}$ and in particular it is real analytic on $[0,1]$. Properties (a) and (c) also ensure that $f$ is strictly increasing on the interval $[0,1]$, because
\begin{equation}\label{eq_lowboundderivative}
f'(x) = \sum_{n=1}^\infty p_n'(x) \geq p_1'(x) - \sum_{n \geq 2} |p_n'(x)| \geq 1 - \sum_{n \geq 1} 4^{-n} > 0.
\end{equation}

Notice that if the map $n \mapsto j_n$ is given, then in order to satisfy properties (a) and (b) one can simply take
\begin{equation}\label{eq_defPn}
p_n(x) = \frac{\varepsilon_n}{n} \prod_{k=0}^{n-1} (x-x_{j_k})
\end{equation}
if $\varepsilon_n$ is sufficiently small; we shall then make this choice from the start, namely, we set $p_n(x)$ to be the polynomial given by formula \eqref{eq_defPn}, and we shall choose $n \mapsto j_n$ and $\varepsilon_n$ in what follows. By the triangular inequality, for all $x \in D_2$ we have $\prod_{i=0}^{n-1} |x-x_i| \leq 3^n$, hence it is not hard to see that in order to satisfy the inequalities in (a) it suffices to take $\varepsilon_n$ in the interval $[0,4^{1-n}]$.

We shall write $f_n(x)$ for the partial sums $\sum_{m=1}^n p_m(x)$. Our choices imply that for all indices $n \in \mathbb{N}$ we have $f(x_{j_m})=f_n(x_{j_m})$ for all $n \geq m$, since
\[
f(x_m)-f_n(x_{j_m}) = \sum_{k>n} p_k(x_{j_m})
\] and the $p_k(x_{j_m})$ all vanish for $k>n \geq m$. 
Also notice that each function $x \mapsto p_k(x)$ is obviously continuous, and it is bijective from $[0,1]$ to itself: to see this, observe that properties (b) and (c) together with our definition of $p_n(x)$ imply $f_n(0)=0$ and $f_n(1)=1$ for all $n$, and furthermore by the same estimate as in equation \eqref{eq_lowboundderivative} we have $f_n'(x) > 0 $ for all $x \in (0,1)$. For later use, notice that we also have
\begin{equation}\label{eq:UpBoundDerivative}
|f_n'(x)| \leq 1 + \sum_{n \geq 1} 4^{-n} \leq 2 \quad \forall x \in (0,1) \text{ and } \forall n \geq 1.
\end{equation}

We shall now define the map $n \mapsto j_n$ and the parameters $\varepsilon_n$ recursively. We take $j_0=0$, $j_1=1$ and $\varepsilon_1=1$, $\varepsilon_2=0$, so that $f_2(x)=f_1(x)=p_1(x)=x$ and $p_2(x)=0$. Now assume that we have already defined $j_n$ and $\varepsilon_n$ for all $n \leq m$ and set $J_m:=\{j_0,\ldots,j_m\}$. We shall show that we can construct $\varepsilon_{m+1}$ and $j_{m+1}$, and that in fact one can take all the $\varepsilon_n$ to be rational numbers.
Together with the choices we have already made, this implies that our inductive construction satisfies the following properties: for all $j \in J_m$ and all $n \geq m$ we have $f_n(x_j)=f_m(x_j)=f(x_j)$; moreover, $f_n(x)$ is a polynomial with \textit{rational} coefficients, hence for all $j \in J_m$ we have $f(x_j)=f_m(x_j) \in \mathbb{Q}$.
We distinguish two cases:
\begin{enumerate}
\item Suppose that $m+1$ is odd. Let $a=\min (\mathbb{N} \setminus J_m)$ and set $j_{m+1}=a$. Notice that the set 
$\{j_0,j_1,\ldots,j_{m}\}$ has cardinality $m+1$, hence $a \leq m+1$.
We have
\[
f(x_{j_{m+1}})=f(x_a)=\sum_{n = 1}^\infty p_n(x_a) = \sum_{n \leq m} p_n(x_a) + p_{m+1}(x_a) + \sum_{n > m+1} p_n(x_a);
\]
independently of the choice of the parameters $\varepsilon_n$ for $n>m+1$, our construction ensures that  $p_n(x_a)=0$ for all $n>m+1 \geq a$, so we have $\sum_{n > m+1} p_n(x_a)=0$ and
\[
f(x_a)=f_{m+1}(x_a)= \sum_{n \leq m} p_n(x_a) + \frac{\varepsilon_{m+1}}{m+1} \prod_{k=0}^{m} (x_a-x_{j_k});
\]
what we require is that $f(x_a)$ be a rational number, and that $\varepsilon_{m+1}$ be sufficiently small and rational. Since rational numbers are dense in $\mathbb{R}$, it is clear that we can choose a rational number $z \in [0,1]$ that satisfies all of the following properties:
\begin{itemize}
\item $z \not \in \{f(x_j) \bigm\vert j \in J_m \}$;
\item the quantity
\[
 (m+1) \cdot  \left(z - \sum_{n \leq m} p_n(x_a)\right) \cdot \prod_{k=0}^{m} (x_a - x_{j_k})^{-1}
\]
does not exceed $4^{-m}$;
\item $z \neq g_{m/2-1}(x_a)$ (recall that $\{g_n\}_{n \in \mathbb{N}}$ is the given set of functions we wish to avoid).
\end{itemize}
We set $\displaystyle \varepsilon_{m+1}:= (m+1) \cdot  \left(z - \sum_{n \leq m} p_n(x_a)\right) \cdot \prod_{k=0}^{m} (x_a - x_{j_k})^{-1}$, which is easily seen to be a rational number. By construction, this choice ensures that
\[
f(x_a)=f_{m+1}(x_a)=z \in \mathbb{Q} \cap [0,1]
\]
and $|\varepsilon_{m+1}| \leq 4^{-m}$; furthermore, it also ensures that $f(x) \neq g_{m/2-1}(x)$ as functions, since we have $f(x_a)=z \neq g_{m/2-1}(x_a)$, and this independently of the choice of $\varepsilon_n$ for $n>m+1$.

\item Suppose that $m+1$ is even. Let $b$ the least natural number such that $y_b$ does not belong to the set $
\{f_m(x_{j}) \bigm\vert j \in J_m\}.
$
We want to choose $j_{m+1}$ and $\varepsilon_{m+1}$ in such a way that $f(x_{j_{m+1}})=y_b$ and $\varepsilon_{m+1}$ is again sufficiently small and rational. Consider the function
\[
h_{m} : x \mapsto \frac{y_b-f_m(x)}{\prod_{k=0}^{m} (x-x_{j_k})};
\]
as $f_m(x) : [0,1] \to [0,1]$ is a bijection, there exists a unique $\overline{x} \in [0,1]$ such that $f_m(\overline{x})=y_b$. By assumption, $\overline{x} \not \in \{ x_{j_0}, x_{j_1}, \ldots, x_{j_{m}}  \}$, so the function $h_{m}$ is continuous in a neighbourhood of $\overline{x}$, since the denominator does not vanish for $x$ sufficiently close to $\overline{x}$. Because of the density of $\mathbb{Q} \cap [0,1]$ in $[0,1]$ and of the fact that $h_{m}(x)$ is continuous in a neighbourhood of $\overline{x}$ and satisfies $h_{m}(\overline{x})=0$, there exists a rational number $z \in [0,1] \setminus \{x_{j_0}, x_{j_1},\ldots, x_{j_{m}}\}$ such that $|h_{m}(z)| < (m+1)^{-1} 4^{-m}$. We set $j_{m+1}$ to be the unique index such that $x_{j_{m+1}}=z$; the construction ensures that $j_{m+1} \not \in \{j_0,\ldots,j_{m}\}$. Finally, we take 
$
\varepsilon_{m+1}:= (m+1) h_{m}(z),
$
which by construction satisfies $|\varepsilon_{m+1}| < 4^{-m}$, and which is a rational number since $h_m(x)$ is a rational function (with rational coefficients). We then have
\[
f(x_{j_{m+1}}) = f_{m+1}(z)= \sum_{n \leq m+1} p_{n}(z) = \sum_{n \leq m} p_{n}(z) + \frac{m+1}{m+1} h_{m}(z) \cdot \prod_{k=0}^m (z-x_{j_k}) = y_b,
\]
and this independently of the choice of $\varepsilon_n$ for $n>m+1$.
\end{enumerate}
It is clear that we can carry out this construction for all $m$. 
We claim that the resulting function $f(x)$ satisfies the properties given in the statement. Indeed:
\begin{itemize}
\item step (1) of the above procedure ensures that $\min (\mathbb{N} \setminus J_{2k})$ is strictly increasing as a function of $k$, hence that $\bigcup_{m \geq 0} J_m=\mathbb{N}$. As we have already seen, $f(x_j)$ is rational whenever $j \in J_m$ for some $m$, hence $f(x_j) \in \mathbb{Q} \cap [0,1]$ for all $j \in \bigcup_{m \geq 0} J_m = \mathbb{N}$. Since $\{x_j \bigm\vert j \in \mathbb{N}\}=\mathbb{Q} \cap [0,1]$, this implies that $f(\mathbb{Q} \cap [0,1]) \subseteq \mathbb{Q} \cap [0,1]$.
\item when applying step (1) of the above procedure for $m=2k$, $k \geq 1$, we make certain that $f(x) \neq g_{k-1}(x)$. Since $k-1$ ranges over all the natural numbers, this implies that $f(x) \neq g_k(x)$ for all $k$.
\item finally, step (2) ensures that the quantity
\[
\min \left\{ b \bigm\vert \forall j \in J_{2k} \text{ we have } y_b \neq f_{2k}(x_j) \right\}
\]
is strictly increasing as a function of $k$, hence for all $b \in \mathbb{N}$ there exists an $m \in \mathbb{N}$ large enough that $y_b=f_m(x_j)$ for some $j \in J_m$. As we have already seen, this implies $f(x_j)=f_m(x_j)=y_b$, so $y_b \in f(\mathbb{Q} \cap [0,1])$. Since this holds for all $b$ and we have $\{y_b \bigm\vert b \in \mathbb{N}\} = \mathbb{Q} \cap [0,1]$ by construction, this implies that $f(\mathbb{Q} \cap [0,1])$ is onto $\mathbb{Q} \cap [0,1]$ as claimed.
\end{itemize}
\end{proof}

\begin{remark}
It is not hard to realize that, since we can choose a countable number of parameters $\varepsilon_n$, and for each we have countably many choices, the set of functions $f$ satisfying the conclusion of the theorem has the cardinality of the continuum. This gives a different (and perhaps more natural) proof of the fact that we can avoid any given set of functions, as long as it is countable. The presentation we have decided to give, on the other hand, has the advantage to make clear that the whole procedure is completely constructive.
\end{remark}

\begin{remark}\label{rmk:DifferentProof}
As it was already true of Franklin's method \cite{MR1501300}, a slight modification of the proof of theorem \ref{thm:Construction} shows the following stronger result: given $\varepsilon>0$ and a strictly increasing analytic function $g:[0,1] \to [0,1]$ with $g(0)=0$ and $g(1)=1$, there exists an analytic function $f:[0,1] \to [0,1]$ that induces a bijection of $\mathbb{Q} \cap [0,1]$ and such that $\|f-g\|_\infty < \varepsilon$, where $\| \cdot \|_{\infty}$ denotes the supremum (or uniform) norm. The modifications one needs to make to the previous argument are minimal: we simply start with $f_0(x)=g(x)$, and at each step we choose $\varepsilon_n$ to be a \textit{real} number smaller than $ 4^{-n}\varepsilon$.

This also gives a different proof of the existence of transcendental functions that induce bijections of $\mathbb{Q} \cap [0,1]$ with itself. Indeed, we know from lemma \ref{lemma:OnlyRationalFunctions} that the algebraic functions with this property are very sparse, so it's easy to see that we can choose a strictly increasing analytic function $g:[0,1] \to [0,1]$ which is far from all of them in the supremum norm and satisfies $g(0)=0$, $g(1)=1$. We then use the argument just sketched to produce an analytic function $f(x)$ inducing a bijection on $\mathbb{Q} \cap [0,1]$ and very close to $g(x)$ in the uniform norm: provided that $\|f-g\|_\infty$ is small enough, $f(x)$ cannot be any of the functions described in lemma \ref{lemma:OnlyRationalFunctions}, so it is a transcendental function with the property we are interested in.
The author is grateful to Umberto Zannier for this remark.
\end{remark}

\section{Height bounds}\label{sec:HeightBounds}
In the interest of clarity we now briefly discuss our conventions for the notion of height of a rational number.  For $x \in \mathbb{Q}$ we write $D(x) \in \mathbb{N}_{>0}$ (resp.~$N(x) \in \mathbb{Z}$) for the denominator (resp.~numerator) of $x$ when it is written in lowest terms. By the \textit{height} of $x$ we mean its \textit{logarithmic height}, namely
\[
h(x) = \log \max\{|N(x)|,D(x)\};
\]
we shall also use $H(x)$ to denote $\max\{|N(x)|,D(x)\}$. Notice that if $x$ is a rational in the interval $[0,1]$, then $h(x)=\log D(x)$.
The function $D$ obviously satisfies the following properties:
\[
D(x_1+ \ldots + x_n) \leq \operatorname{lcm} \{ D(x_1), \cdots,  D(x_n) \}, \quad D(x_1x_2)=D(x_1)D(x_2);
\]
analogously, the function $h$ satisfies (see for example \cite[Chapter 3]{MR1756786})
\[
h(x_1+\ldots+x_n) \leq h(x_1)+\ldots+h(x_n)+(n-1)\log 2, \; h(x_1x_2) \leq h(x_1)+h(x_2), \; h(1/x)=h(x).
\]
We shall make free use of these properties without further comment.

We can now define the \textit{lexicographic ordering} $\prec$ on the rational numbers in the interval $[0,1]$ as follows: we say that $q_1 \prec q_2$ if either $H(q_1) < H(q_2)$ holds, or we have both $H(q_1)=H(q_2)$ and $q_1 < q_2$. It is easy to see that this is a well-ordering of $\mathbb{Q} \cap [0,1]$. We can then define the \textit{lexicographic enumeration} $x_0, x_1, \ldots$ of the rationals in $[0,1]$: we set $x_0=0$ and, for $n \geq 0$,
\[
x_{n+1} = \min_{\prec} (\mathbb{Q} \cap [0,1]) \setminus \{x_0,\ldots,x_n\},
\]
where by $\min_{\prec}$ we mean the minimum with respect to the lexicographic ordering. It is easy to check that the following lemma holds.

\begin{lemma}\label{lemma:IndexGrowth}
Let $(x_n)_{n \in \mathbb{N}}$ be the lexicographic enumeration of the rationals in the interval $[0,1]$. For all $n \geq 2$ we have
$
H(x_n) \geq \sqrt{2n};
$
equivalently, given $q \in \mathbb{Q} \cap (0,1)$, the unique index $n$ for which $q=x_n$ satisfies $n \leq \frac{H(q)^2}{2}$.
\end{lemma}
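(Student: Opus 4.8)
The plan is an elementary counting argument: estimate how many rationals in $[0,1]$ have small height. By construction the ordering $\prec$ ranks the elements of $\mathbb{Q}\cap[0,1]$ first according to the value of $H$, so for every integer $H\ge 1$ the rationals $q\in\mathbb{Q}\cap[0,1]$ with $H(q)\le H$ are precisely the first $R(H)$ terms $x_0,\dots,x_{R(H)-1}$ of the lexicographic enumeration, where $R(H):=\#\{q\in\mathbb{Q}\cap[0,1]:H(q)\le H\}$. Consequently, if $q=x_n$ and $H:=H(q)$, then $n\le R(H)-1$, and it suffices to prove $R(H)-1\le H^2/2$ whenever $H\ge 2$; the restriction to $H\ge 2$ (equivalently $n\ge 2$) is harmless, since $x_0=0$ and $x_1=1$ are the only rationals in $[0,1]$ of height $1$.

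The next step is to bound $R(H)$. Since $H(q)=D(q)$ for $q\in\mathbb{Q}\cap[0,1]$, one has $R(H)=\sum_{b=1}^{H}\#\{q\in\mathbb{Q}\cap[0,1]:D(q)=b\}$. The denominator $b=1$ contributes the two rationals $0$ and $1$; for each $b\ge 2$ the rationals with $D(q)=b$ are the fractions $a/b$ with $1\le a\le b-1$ and $\gcd(a,b)=1$, of which there are $\varphi(b)\le b-1$. Summing,
\[
R(H)\;\le\;2+\sum_{b=2}^{H}(b-1)\;=\;2+\frac{H(H-1)}{2}\;=\;\frac{H^2-H+4}{2},
\]
so that $R(H)-1\le\tfrac12(H^2-H+2)$.

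It then remains only to observe that $H^2-H+2\le H^2$ exactly when $H\ge 2$. Hence for $n\ge 2$, writing $H=H(x_n)\ge 2$, we obtain $n\le R(H)-1\le H^2/2$, which is the second assertion of the lemma; taking square roots of $H(x_n)^2\ge 2n$ gives the first. I do not anticipate a genuine difficulty, but the one point that must be handled with care is the estimate $\#\{q:D(q)=b\}\le b-1$ for $b\ge 2$: the cruder and more obvious bound $\le b$ would yield only $n\le (H^2+H)/2$, which falls just short of $H^2/2$, so one really does need to discard the non-reduced fraction $b/b$ (that is, to use $\gcd(b,b)=b\ne 1$).
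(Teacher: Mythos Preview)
Your proof is correct. The paper does not actually supply a proof of this lemma---it merely states that ``it is easy to check that the following lemma holds''---so there is nothing to compare against; your counting argument via $R(H)=2+\sum_{b=2}^{H}\varphi(b)\le 2+\tfrac{H(H-1)}{2}$ is precisely the kind of elementary verification the paper leaves to the reader, and your remark that the crude bound $\varphi(b)\le b$ would just fail is a nice observation.
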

\begin{remark}
Asymptotically, these inequalities are not sharp: indeed, it is well-known that $\lim_{n \to \infty} \frac{H(x_n)}{\sqrt{n}} =\frac{\pi}{\sqrt{3}}$. However, the only possible improvement lies in the constant factor sitting in front of $\sqrt{n}$ (resp.~of $H(q)^2$), and not in the functional form of the bound; since we are not interested in especially sharp results, we chose to use the inequalities of lemma \ref{lemma:IndexGrowth} because of their particularly simple form.
\end{remark}

We can now prove the following strengthening of theorem \ref{thm:Construction}:
\begin{theorem}
Let $\{g_n(x)\}_{n \geq 0}$ be any countable family of functions $[0,1] \to [0,1]$. 
There exists a strictly increasing analytic function $f:[0,1] \to [0,1]$ such that
\begin{enumerate}
\item $f$ restricts to a bijection $\mathbb{Q} \cap [0,1] \to \mathbb{Q} \cap [0,1]$;
\item $f$ is different from all the $g_n(x)$;
\item $h(f(x)) \leq B(H(x)^2)$, where $B: \mathbb{N} \setminus\{0\} \to \mathbb{N}$ is given by
$
B(t) = 4 t \cdot 48^{t} \cdot \Gamma(t).
$
\end{enumerate}

\end{theorem}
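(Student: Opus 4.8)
The plan is to re-run the construction of Theorem~\ref{thm:Construction}, taking both enumerations $\{x_n\}$ and $\{y_n\}$ to be the lexicographic enumeration of $\mathbb{Q}\cap[0,1]$ and constraining every rational choice made along the way to have small height. The point of the lexicographic enumeration is that Lemma~\ref{lemma:IndexGrowth}, together with its elementary companion $H(x_j)\le j$ (there are at least $j+1$ rationals in $[0,1]$ of height $\le j$, namely $0$, $1$ and $1/2,\ldots,1/j$), pins down where everything lives: if $q=x_N$ then $N\le H(q)^2/2$, and --- exactly as in the proof of Theorem~\ref{thm:Construction}, since $\min(\mathbb{N}\setminus J_{2k})$ is strictly increasing and hence $\ge k+1$ --- we get $x_N\in J_{2N}$, so $f(q)=f_{2N}(q)=\sum_{n=1}^{M}p_n(q)$ with $M=2N\le H(q)^2$. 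Because $h(q)=\log H(q)$ and $h$ is additive and submultiplicative, $h(f(q))$ is then bounded in terms of $M$, of $\log H(q)$, of the heights $h(\varepsilon_n)$ for $n\le M$, and of the heights $h(x_{j_k})$ of the nodes used; the second conclusion of the theorem is inherited verbatim from Theorem~\ref{thm:Construction}.

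The heart of the matter is an induction producing explicit bounds on $h(\varepsilon_m)$, $h(x_{j_m})$ and $h(f(x_{j_m}))=h(f_m(x_{j_m}))$. I would carry a single increasing ``master bound'' $\beta(m)$ dominating all three for indices $\le m$, and show that each step multiplies it only by a factor polynomial in $m$, so that $\beta(m)$ grows roughly like $48^{m}\Gamma(m)$ --- precisely the shape of $B$. The two cases are dual. In an \emph{odd} step the new node is $\min(\mathbb{N}\setminus J_m)$, which is $\le m+1$, so $h(x_{j_{m+1}})\le\log(m+1)$ is tiny; the only freedom is the assigned value $z=f(x_{j_{m+1}})$, which by density can be chosen with denominator at most a constant times $(m+1)^{m+2}\,4^{m}\prod_{k\le m}D(x_{j_k})$ --- enough slack to also avoid the finitely many forbidden values (the previous $f(x_j)$'s and $g_{m/2-1}(x_a)$) and to keep $|\varepsilon_{m+1}|\le 4^{-m}$. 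In an \emph{even} step it is reversed: the value $y_b$ has height $\le m+1$, but the new node $z$ must lie close enough to the point $\bar x$ with $f_m(\bar x)=y_b$ that $|h_m(z)|<(m+1)^{-1}4^{-m}$, so one must control how fast $h_m$ varies near $\bar x$. This is the one genuinely arithmetical step: $|h_m'|$ near $\bar x$ is bounded by $1/\delta$ to a power $O(m)$ (times a polynomial in $m$), where $\delta=\operatorname{dist}(\bar x,\{x_{j_k}\}_{k\le m})$, and $\delta$ is bounded below by combining $|f_m(\bar x)-f_m(x_{j_k})|=|y_b-f(x_{j_k})|\ge\bigl(H(y_b)\,H(f(x_{j_k}))\bigr)^{-1}$ with $\|f_m'\|_\infty\le 2$ (equation~\eqref{eq:UpBoundDerivative}); thus $\delta$, and with it the denominator of the even-step node, is controlled by the already-bounded values $h(f(x_{j_k}))$, $k\le m$.

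It remains to bound $h(\varepsilon_{m+1})$ once the new node is fixed. In the even step $\varepsilon_{m+1}=(m+1)(y_b-f_m(z))\big/\prod_{k\le m}(z-x_{j_k})$, so one needs $D(f_m(z))$, which is at most $D(z)^{m}$ times the common denominator of the coefficients of $f_m$; the latter divides $\prod_{n\le m}n\,D(\varepsilon_n)\prod_{k<n}D(x_{j_k})$, all of whose factors are under inductive control. The odd step is analogous but easier, since there $f_m$ is evaluated at the low-height point $x_a$. Summing the resulting bounds for $h(p_n(q))$ over $n\le M\le H(q)^2$ then yields a bound of the claimed form $B(H(q)^2)$.

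I expect the main obstacle to be the bookkeeping: one must choose the constants and the precise form of the inductive bound so that the polynomial-in-$m$ factors accumulated over the $\sim H(q)^2$ steps are swallowed by the $48^{H(q)^2}$ term, while the genuinely factorial growth --- coming from the $\prod_{n\le m}$ over the coefficient denominators of $f_m$, and from evaluating the degree-$m$ polynomial $f_m$ at a node whose height is comparable to the current master bound --- is swallowed by the $\Gamma(H(q)^2)$ term. The delicate point is getting $\delta$, and hence the even-step node's denominator, bounded without an extra factorial creeping in; this works precisely because the values $f(x_{j_k})$ fixed before an even step all have height dominated by the master bound at the \emph{previous} index rather than the new one.
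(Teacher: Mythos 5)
Your overall architecture is the same as the paper's: lexicographic enumerations on both sides, an inductive ``master bound'' controlling $h(\varepsilon_n)$, $h(x_{j_n})$ and $h(f(x_{j_n}))$ simultaneously, the index bookkeeping $k\le 2t\le H(q)^2$ via Lemma~\ref{lemma:IndexGrowth}, the easy odd step, and a Liouville-type lower bound ($|r|\ge 1/D(r)$ for $r\in\mathbb{Q}\setminus\{0\}$, applied to $y_b-f_m(x_{j_k})$ together with $\|f_m'\|_\infty\le 2$) in the even step. All of that matches the paper.

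However, there is a genuine quantitative gap in your even step, and it is exactly at the point you flag as delicate. You lower-bound the single distance $\delta=\operatorname{dist}(\overline x,\{x_{j_k}\}_{k\le m})$ by the Liouville bound through the \emph{values} $f(x_{j_k})$, whose heights are of order $mX(m)$, and then control $\prod_{k\le m}|z-x_{j_k}|$ (equivalently $|h_m'|$ near $\overline x$) by a power $O(m)$ of $1/\delta$. That yields a lower bound of shape $\exp(-Cm^2X(m))$ for the product, forcing the new node's height, and hence $X(m+1)$, to be of order $m^2X(m)$; the resulting recursion gives $X(m)\gtrsim C^m\Gamma(m)^2$, not the claimed $48^m\Gamma(m)$. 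The paper avoids this by paying the expensive bound only once: for all but the one or two nodes $a_t,a_{t+1}$ adjacent to $\overline x$, it uses $|z-a_k|\ge a_{k+1}-a_k\ge \exp(-h(a_k)-h(a_{k+1}))$, i.e.\ a bound through the \emph{node} heights $h(a_k)\le X(m)$ only, and these factors telescope to a total loss of merely $\exp(-4X(m))$; the Liouville bound through $h(f_m(a_{t+1}))\le 4mX(m)$ is then invoked only for the adjacent factor(s), giving $\exp(-13mX(m))$ overall and hence $h(z)\le 14mX(m)<X(m+1)=48mX(m)$. Without this splitting of the product your method proves the theorem only with a weaker bound of the form $4t\cdot C^t\Gamma(t)^2$ in place of $B(t)=4t\cdot 48^t\Gamma(t)$. (Your observation that the values $f(x_{j_k})$ have height controlled at the previous index is necessary but not sufficient; the extra factor of $m$ comes from applying that bound to all $m+1$ factors rather than to one.) The remainder of your outline --- the odd step, the bound on $D(f_m(z))$ via the coefficient denominators, and the final summation over $n\le H(q)^2$ --- is consistent with the paper.
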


\begin{proof}
We follow closely the proof of theorem \ref{thm:Construction} (keeping in particular all the notation), and only point out the necessary adjustments to the argument.
Let
\[
X(n) = \begin{cases} 48^t \Gamma(t), \text{ if } t \geq 1 \\ 1, \text{ if } t=0,
\end{cases}
\]
and notice that the function $X(n)$ satisfies the inequality
\[
\sum_{k=0}^{n-1} X(k) \leq X(n) \quad \forall n \geq 1;
\]
we shall need this fact in what follows, and we will often use it in the equivalent form $\sum_{k=0}^{n} X(k) \leq 2X(n)$.
We take $x_n$ and $y_n$ to both be the lexicographic enumeration of the rationals; we shall endeavour to choose the sequences $j_n, \varepsilon_n$ in such a way that the following hold: 
\begin{enumerate}
\item $\displaystyle h \left( \frac{\varepsilon_n}{n} \right) \leq nX(n)$ for all $n \geq 1$;
\item $h(x_{j_k}) \leq X(n)$ for all $0 \leq k \leq n$.
\end{enumerate}

Assuming now that we can indeed choose $j_n, \varepsilon_n$ so as to satisfy 1 and 2 above, for all $x \in \mathbb{Q} \cap [0,1]$ and for all $n \geq 1$ we have the following inequalities:
\begin{equation}\label{eq:UpperBoundfn0}
\begin{aligned}
h(f_n(x)) & = \log D(f_n(x)) \\
& = \log D\left( \sum_{m=1}^n p_m(x) \right) \\
& \leq \log \operatorname{lcm} \{ D(p_m(x)) \bigm\vert m=1,\ldots,n \} \\
& = \log \operatorname{lcm} \left\{ D\left( \frac{\varepsilon_m}{m} \prod_{k=0}^{m-1} (x-x_{j_k})  \right) \bigm\vert m=1,\ldots,n \right\} \\
& \leq \sum_{m=1}^n \log D\left(\frac{\varepsilon_m}{m}\right) + \sum_{k=0}^{n-1}\log D(x-x_{j_k}) \\
& \leq \sum_{m=1}^n h\left(\frac{\varepsilon_m}{m}\right) + \sum_{k=0}^{n-1} (\log D(x)+\log D(x_{j_k}) )\\
& = \sum_{m=1}^n h\left(\frac{\varepsilon_m}{m}\right) + \sum_{k=0}^{n-1} (h(x)+h(x_{j_k}) )\\
& \leq \sum_{m=1}^n mX(m) + n h(x) + \sum_{k=0}^{n-1}X(k) \\
& \leq n \sum_{m=1}^n X(m) + n h(x) + \sum_{k=0}^{n-1}X(k) \\
& \leq n h(x) + 3nX(n).
\end{aligned}
\end{equation}
In particular, if we evaluate $f_n(x)$ at $x=x_{j_k}$ with $k \leq n$ we have
$
h(x_{j_k}) \leq X(k) \leq X(n),
$
hence $h(f_n(x_{j_k})) \leq 4nX(n)$; since furthermore we have $f(x_{j_k})=f_k(x_{j_k})$ for all $k \geq 1$, we obtain for all $k \geq 1$ the inequality
\begin{equation}\label{eq:UpperBoundfn}
h(f(x_{j_k}))=h(f_k(x_{j_k})) \leq 4kX(k).
\end{equation}
Furthermore, we claim that, given $x \in \mathbb{Q} \cap [0,1]$, the corresponding index $k$ such that $x=x_{j_k}$ satisfies $k \leq H(x)^2$. We now prove this statement. Notice first that this is obviously true for $k=0,1$, so we can assume $k \geq 2$. Following the procedure described in the proof of theorem \ref{thm:Construction}, at every step such that $m+1$ is odd we let $j_{m+1}=\min \mathbb{N} \setminus J_m$; as we have $j_0=0$ and $j_1=1$, this implies that, for all integers $t \geq 0$, at step $m+1=2t+1$ we have $j_{m+1} \geq t+1$, which means that all the $x_n$ with $n \leq t$ are among the $x_{j_s}$ for $s \leq 2t$. 
Hence, letting $t$ be the index such that $x=x_t$, the unique index $k$ such that $t=j_k$ satisfies
\[
k \leq 2t \leq H(x_t)^2 = H(x)^2,
\]
where we have used lemma \ref{lemma:IndexGrowth} (recall that we have assumed $k \geq 2$). 
From this fact and equation \eqref{eq:UpperBoundfn} we then deduce the inequality
\[
h(f(x)) \leq 4H(x)^2 X\left(H(x)^2\right).
\]

To establish the theorem, therefore, it suffices to show that it is possible to choose the sequences $j_n$ and $\varepsilon_n$ so as to satisfy conditions 1 and 2 above.
Again we consider separately the case of $m+1$ being odd or even. 
\begin{itemize}
\item \textbf{$m+1$ is odd.} We have $a \leq m+1$, hence \[h(x_{j_{m+1}}) = h(x_a) \leq h(x_{m+1}) \leq m+1 \leq X(m+1).
\]
We now need to choose $z$ and $\varepsilon_{m+1}$, which are related by 
\[
z=\sum_{n \leq m} p_n(x_a) + \frac{\varepsilon_{m+1}}{m+1} \prod_{k=0}^m (x_a-x_{j_k}),
\]
in such a way that $\varepsilon_{m+1}$ does not exceed $4^{-m}$ and the corresponding $z$ does not belong to the set
\[
\{f(x_j) \bigm\vert j \in J_m\} \cup \{g_{m/2-1}(x_a)\}.
\]
Since this set has cardinality $m+2$ and the map $\varepsilon_{m+1} \mapsto z$ is injective, there are at most $m+2$ values of $\varepsilon_{m+1}$ that we need to exclude. Hence there exists an $s \in \{0,\ldots,m+2\}$ such that 
$
\frac{s}{(m+2)4^{m}} \leq 4^{-m}
$
is an acceptable value of $\varepsilon_{m+1}$. Finally, for the heights of $x_{j_{m+1}}=x_a$ and $\frac{\varepsilon_{m+1}}{m+1}$ we have the estimates
\[
h(x_a) = \log D(x_a) \leq \log a \leq \log(m+1) \leq X(m+1)
\]
and
\[
h\left( \frac{\varepsilon_{m+1}}{m+1} \right)=\log D\left( \frac{\varepsilon_{m+1}}{m+1} \right) = \log\left((m+1)(m+2)\right) + m \log 4 \leq (m+1)X(m+1),
\]
which finishes the inductive step in this case.

\item \textbf{$m+1$ is even.} Notice first that we have $b \leq m+1$, hence $h(y_b) \leq \log(m+1)$. 
Recall then that we defined $\varepsilon_{m+1}$ by the formula
\begin{equation}\label{eq:epsilonNew}
\varepsilon_{m+1} = (m+1) \frac{y_b-f_m(z)}{\prod_{k=0}^m (z-{x_{j_k}})},
\end{equation}
where $z$ is a rational number sufficiently close to $\overline{x}$ (the only real number in $[0,1]$ such that $f_m(\overline{x})=y_b$). We now want to show that $z$ can be chosen to be of controlled height, and use this fact to also bound the height of $\varepsilon_{m+1}$.

Let $a_0 < a_1 < \ldots < a_m$ be the increasing reordering of the points $x_{j_0}, x_{j_1}, \ldots, x_{j_m}$. There is a unique index $t$ such that $a_t < \overline{x} < a_{t+1}$. For ease of exposition, let us assume that $a_{t+1}-\overline{x} \leq \overline{x} - a_t$ (that is, $\overline{x}$ lies to the right of the midpoint of the segment $[a_t,a_{t+1}]$), the other case being perfectly symmetric. We take $z$ to be the maximum of the set
\[
\{ q \in \mathbb{Q} \cap [0,1] \bigm\vert H(q) \leq M:=\D, q < \overline{x} \}.
\]
Notice that the distance between $z$ and $\overline{x}$ is at most $1/M$. We now estimate the corresponding value of $\varepsilon_{m+1}$, studying separately numerator and denominator of \eqref{eq:epsilonNew}.

As for the former, we have already remarked that the derivative of $f_n(x)$ is bounded in absolute value by $2$ (see equation \eqref{eq:UpBoundDerivative}); from Lagrange's theorem we then get
\[
|f_m(z)-y_b| = |f_m(z)-f_m(\overline{x})| = |f_m'(\xi)(z-\overline{x})| \leq 2|z-\overline{x}| \leq \frac{2}{M},
\]
where $\xi$ is a suitable point between $z$ and $\overline{x}$.

\smallskip

Now consider the denominator of the right hand side of \eqref{eq:epsilonNew}. Notice that for $k<t$ we have
\[
|z-a_k| = (z-a_{k+1})+(a_{k+1}-a_k) \geq a_{k+1}-a_k
\]
and since $a_{k+1}, a_k$ are distinct we have
\begin{equation}\label{eq:LowerBoundDifference}
\begin{aligned}
|z-a_k| & \geq a_{k+1}-a_k \\
&\geq \frac{1}{D(a_{k+1}-a_k)}\\
&  = \exp(-\log D(a_{k+1}-a_k)) \\
& \geq \exp( -\log D(a_{k+1}) -\log D(a_k) ) \\
& = \exp( -h(a_{k+1}) -h(a_k));
\end{aligned}
\end{equation}
a similar argument works for $k>t+1$. Recalling that the $a_i$, $i=0,\ldots,m$, are a permutation of the $x_{{j_k}}$, $k=0,\ldots,m$, and that by the inductive assumption we have $H(x_{j_k}) \leq X(k)$, we obtain
\begin{equation}\label{eq_UpBoundDenominator}
\begin{aligned}
& \left|\frac{1}{\prod_k (z-x_{j_k})}\right| \leq \\
& \leq \frac{1}{|(z-a_t)(z-a_{t+1})|} \cdot \prod_{k<t} \exp(h(a_{k+1})+h(a_k)) \cdot \prod_{k>t+1} \exp(h(a_{k-1})+h(a_k)) \\
& < \frac{1}{|(z-a_t)(z-a_{t+1})|} \cdot \exp\left(2 \sum_{k=0}^m X(k)\right) \\
& \leq \frac{1}{|(z-a_t)(z-a_{t+1})|} \cdot \exp\left(4X(m)\right).
\end{aligned}
\end{equation}
Thus we only need to estimate the distances $|z-a_t|, |z-a_{t+1}|$.

\begin{itemize}
\item If $|z-a_t|<|z-a_{t+1}|$, then $z$ lies to the left of the midpoint of the segment $[a_t,a_{t+1}]$ while $\overline{x}$ (by assumption) lies to the right of it; hence we have
\[
\frac{a_t+a_{t+1}}{2} - \frac{1}{M} \leq \overline{x}-\frac{1}{M} \leq z < \frac{a_t+a_{t+1}}{2}
\]
and therefore
\[
\frac{a_{t+1}-a_t}{2} -\frac{1}{M} \leq z-a_t < \frac{a_{t+1}-a_t}{2}.
\]
By the same argument as in equation \eqref{eq:LowerBoundDifference}, and using $h(a_t), h(a_{t+1}) \leq X(m)$, we then get
\[
\begin{aligned}
z-a_t & \geq \frac{a_{t+1}-a_t}{2} - \frac{1}{M} \\
& \geq \frac{1}{2}\exp(-2X(m)) - \frac{1}{M} \\
& \geq \frac{1}{3} \exp(-2X(m)).
\end{aligned}
\]
Since $a_{t+1}-z \geq z-a_t$ by assumption, we finally obtain
\[
\begin{aligned}
\left|\frac{1}{\prod_k (z-x_{j_k})}\right| & \leq \frac{\exp\left(4X(m) \right)}{|(z-a_t)(z-a_{t+1})|} \\ & \leq 9 \exp\left(4X(m) + 4X(m)\right) \\ & = 9 \exp(8X(m)).
\end{aligned}
\]
\item If $|z-a_t| \geq |z-a_{t+1}|$, then it suffices to give a lower bound for $|z-a_{t+1}|$, which we do as follows. By construction we have $z<\overline{x}<a_{t+1}$, so it suffices to give a lower bound for $a_{t+1}-\overline{x}$. We set $q_m(x):=y_b-f_m(x)$ and observe that using Lagrange's theorem we have
\begin{equation}\label{eq:lowBoundOverlineX}
\begin{aligned}
|q_m(a_{t+1})| & = |q_m(a_{t+1})-q_m(\overline{x})| \\
& = |q_m'(\xi)(a_{t+1}-\overline{x})| \\
& = |f_m'(\xi)| \cdot (a_{t+1}-\overline{x}) \\
& \leq 2(a_{t+1}-\overline{x}),
\end{aligned}
\end{equation}
where $\xi$ is a certain point in the interval $(\overline{x},a_{t+1})$. Thus it suffices to give a lower bound for $|q_m(a_{t+1})|=|f_m(a_{t+1})-y_b|$: notice that this number is nonzero (by assumption $y$ does not belong to the set $f_m(J_m)$) and its height is at most
\[
\log D(h(y_b)) + \log D(f_m(a_{t+1}))
= h(y_b) + h(f_m(a_{t+1})) \leq \log (m+1) + 4mX(m),
\]
where we have used inequality \eqref{eq:UpperBoundfn} and the fact that $a_{t+1}$ is one of the $m+1$ numbers $x_{j_0}, \ldots, x_{j_m}$.
Thus we have
\[
|q_m(a_{t+1})| \geq \exp \left(-\log (m+1) -4mX(m)\right) = \frac{\exp\left(-4mX(m)\right)}{m+1};
\]
we deduce from \eqref{eq:lowBoundOverlineX} that
\[
z-a_t \geq a_{t+1}-z \geq a_{t+1}-\overline{x} \geq \frac{1}{2(m+1)} \exp\left(-4mX(m)\right),
\]
and putting everything together we obtain
\[
\begin{aligned}
\displaystyle \left|\frac{1}{\prod_k (z-x_{j_k})}\right| & \leq \frac{\exp(4X(m))}{|(z-a_t)(z-a_{t+1})|} \\ & \leq 4(m+1)^2 \cdot \exp\left(8mX(m) + 4X(m)\right) \\ & \leq \exp\left(13m X(m)\right).
\end{aligned}
\]
\end{itemize}

Thus we see that in all cases the quantity $\displaystyle \left|\frac{1}{\prod_{k=0}^m (z-x_{j_k})}\right|$ is bounded above by $\exp\left(13m X(m))\right)$.
Combining our bounds on the numerator and denominator of the right hand side of \eqref{eq:epsilonNew}, we see that our choice of $z$ leads to a value of $\varepsilon_{m+1}$ that is bounded above by
\[
\displaystyle \varepsilon_{m+1} \leq (m+1)\left|\frac{y_b-f_m(z)}{\prod_{k=0}^m (z-x_{j_k})}\right| \leq  \frac{2(m+1) \exp\left(13m X(m)\right)}{M} = 4^{-m},
\]
and $x_{j_{m+1}}:=z$ has height at most
\[
\log(M)=\log\left(\D\right) \leq 14 m X(m) < X(m+1).
\]
Finally, the height of $\displaystyle \frac{\varepsilon_{m+1}}{m+1}$ (that is, the logarithm of its denominator) is at most
\[
\begin{aligned}
\log D&(y_b-f_m(z)) +  \log \left|N\left( \prod_{k=0}^m (z-x_{j_k}) \right)\right| \\
& \leq h(y_b)+  h(f_m(z)) + \sum_{k=0}^m h(z-x_{j_k})  \\ & \leq h(y_b) + mh(z)+3mX(m) + (m+1)\log 2 + (m+1)h(z) + \sum_{k=0}^m h(x_{j_k}) \\
& \leq \log(m+1) + (2m+1)h(z) + 3mX(m)+ (m+1)\log 2 + \sum_{k=0}^m X(k) \\
& \leq \log(m+1) + (2m+1)h(z) + (m+1)\log 2 +(3m+2)X(m) \\
& \leq \log(m+1) + 14(2m+1)m X(m) + (m+1)\log 2 + (3m+2)X(m) \\
& \leq 48m^2X(m)= m (48m X(m)) =m X(m+1) \\ & <(m+1)X(m+1),
\end{aligned}
\]
where we have used \eqref{eq:UpperBoundfn0} on the second line and $h(z) \leq 14mX(m)$ on the fifth.

\end{itemize}
This concludes the inductive step, and therefore the proof of the theorem.
\end{proof}

\begin{remark}
While there is certainly room to improve the bound $B(t)$ of the previous theorem (for example, the numerical constant 48 is far from optimal), without any new ideas it seems unlikely that one can do substantially better than $B(t)=\Gamma(t)$; let us rapidly go through the proof again to see why we cannot expect to beat this bound.
In order to get a lower bound for the denominator of \eqref{eq:epsilonNew}, we estimate the height of $q_m(a_{t+1})$; since $q_m(x)$ is a polynomial of degree $m$ and the height of $a_{t+1}$ could potentially be comparable with $X(m)$, at least if $a_{t+1}=x_{j_m}$, the bound we get for $h(q_m(a_{t+1}))$ will roughly be of size $mX(m)$. On the other hand, in order for the ratio defining $\varepsilon_{n+1}$ to be small enough, we need at the very least the numerator to be smaller than the denominator; since the lower bound for the denominator is no be better than $\exp(-m X(m))$, in the notation of the previous proof we will have to take $M$ at least of size $\exp(mX(m))$, which means that we cannot rule out $z=x_{j_{m+1}}$ being of height $\approx mX(m)$. Hence with the present method we don't expect to be able to do better than $X(m+1) \geq mX(m)$, that is, $X(m) \geq \Gamma(m)$.
\end{remark}

\section{Graphs with ``many'' rational points of small height}\label{sec:ManyRationalPoints}
Recall the following celebrated result of Pila, already referred to in the introduction:
\begin{theorem}{(\cite[Theorem 9]{MR1115117})}\label{thm_Pila}
Let $f:[0,1] \to [0,1]$ be a transcendental analytic function. For all $\varepsilon>0$, the function
\[
C_f(T)=\# \left\{ x \in \mathbb{Q} \cap [0,1] \bigm\vert H(x) \leq T, H(f(x)) \leq T \right\}
\]
satisfies $\lim_{T \to \infty} C_f(T) T^{-\varepsilon}=0$.
\end{theorem}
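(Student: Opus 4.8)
This is Pila's theorem, and the natural line of attack is the \emph{determinant method} of Bombieri and Pila. Fix $\varepsilon>0$. Since $C_f(T)$ is finite for every $T$, it suffices to produce a constant $c=c(f,\varepsilon)$ and a threshold $T_0$ with $C_f(T)\le c\,T^{\varepsilon/2}$ for all $T\ge T_0$; then $C_f(T)T^{-\varepsilon}\le c\,T^{-\varepsilon/2}\to 0$. Choose an integer $d$ so large that $8/(d+3)<\varepsilon/2$, put $D=\binom{d+2}{2}$, and let $\phi_1,\dots,\phi_D$ be the functions $x\mapsto x^{i_\ell}f(x)^{j_\ell}$ as $(i_\ell,j_\ell)$ runs over the exponent pairs with $i_\ell+j_\ell\le d$, ordered so that $\phi_1\equiv 1$. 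These are real-analytic on $[0,1]$ with all derivatives up to order $D$ bounded by a constant $c_1=c_1(f,d)$.

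\textbf{Analytic preparation.} Because $f$ is transcendental, none of the Wronskians $W_r=W(\phi_1,\dots,\phi_r)$, $1\le r\le D$, can vanish identically: if $W_r\not\equiv 0$ but $W_{r+1}\equiv 0$, then on a subinterval $\phi_{r+1}$ would be an $\mathbb{R}$-linear combination of $\phi_1,\dots,\phi_r$, hence (by analyticity) everywhere, which is an algebraic relation for $f$. Being analytic, each $W_r$ has only finitely many zeros in $[0,1]$; cutting $[0,1]$ at all of these yields a partition into $B=B(f,d)$ intervals on each of which $\phi_1,\dots,\phi_D$ form a Chebyshev system. In particular, on each such interval $J$ and for any pairwise distinct $x_1,\dots,x_D\in J$ the ``generalized Vandermonde'' $\det\bigl(\phi_\ell(x_k)\bigr)$ is nonzero.

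\textbf{Determinant step.} Subdivide each of the $B$ intervals into subintervals of length $\approx T^{-8/(d+3)}$, so that the total number of pieces is $M\le c_2(f,d)\,T^{8/(d+3)}$; fix one piece $I$, lying inside some Chebyshev interval $J$. Suppose $I$ contained $D$ distinct rationals $x_1,\dots,x_D$ with $H(x_k)\le T$ and $H(f(x_k))\le T$, and set $\Delta=\det\bigl(\phi_\ell(x_k)\bigr)$. By the previous step $\Delta\ne 0$. On one hand, multiplying the $k$-th row by $D(x_k)^d D(f(x_k))^d\le T^{2d}$ clears all denominators, so $T^{2dD}\Delta\in\mathbb{Z}\setminus\{0\}$ and $|\Delta|\ge T^{-2dD}$. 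On the other hand, expanding each $\phi_\ell$ in its Taylor polynomial of degree $D-1$ about a point of $I$ and performing the corresponding column reduction gives the classical bound $|\Delta|\le c_1'(f,d)\,|I|^{1+2+\cdots+(D-1)}=c_1'(f,d)\,|I|^{D(D-1)/2}$. Since $2dD\big/\bigl(D(D-1)/2\bigr)=8/(d+3)$, for $|I|$ of the chosen size and $T\ge T_0(f,d)$ this upper bound is strictly below $T^{-2dD}$ — a contradiction. Hence each piece $I$ contains at most $D-1$ such rationals, and
\[
C_f(T)\ \le\ (D-1)\,M\ \le\ (D-1)\,c_2(f,d)\,T^{8/(d+3)}\ \le\ c(f,\varepsilon)\,T^{\varepsilon/2}
\]
for $T\ge T_0$, as required.

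The part I expect to be genuinely delicate is the analytic preparation: the rest is the routine Bombieri--Pila machinery — the arithmetic lower bound for a nonzero determinant with bounded denominators, the Taylor-expansion upper bound, and a one-line optimization of the exponent. What takes real work is converting the transcendence of $f$ into a \emph{finite} partition of $[0,1]$ on each piece of which the monomials along the graph form a Chebyshev system; this rests on the non-vanishing of the successive Wronskians (clear from transcendence) together with the classical but nontrivial fact, going back to Pólya, that a nonzero member of a Chebyshev space has at most $D-1$ zeros, proved by iterated application of Rolle's theorem. It is precisely here that analyticity, rather than mere smoothness, is used: it guarantees that the Wronskians have only finitely many zeros, so that the auxiliary partition is finite.
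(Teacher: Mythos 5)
The paper does not prove this theorem---it is quoted verbatim from Pila's cited 1991 paper---so there is no internal proof to compare against. Your argument is the standard Bombieri--Pila determinant method, which is essentially Pila's original proof (non-vanishing of the successive Wronskians from transcendence, a finite Chebyshev partition from analyticity, the arithmetic lower bound $|\Delta|\ge T^{-2dD}$ against the Taylor upper bound $|I|^{D(D-1)/2}$, and the exponent $4d/(D-1)=8/(d+3)$), and it is correct in outline; the one point to tighten is that with $|I|$ equal to $T^{-8/(d+3)}$ on the nose the Taylor bound is only $c_1'(f,d)\,T^{-2dD}$, so you must shrink the subintervals by a fixed factor depending on $c_1'(f,d)$ (taking $T\ge T_0$ alone does not produce the contradiction), which harmlessly inflates $M$ by a constant.
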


One can ask whether this theorem is optimal, that is, if the gauge functions $x^\varepsilon$ can be replaced by anything smaller. The answer has again been given by Pila, who has shown that theorem \ref{thm_Pila} is indeed sharp, in the following sense.
We say that a function $s : \mathbb{R} \to \mathbb{R}$ is \textbf{slowly increasing} if for all $\varepsilon>0$ we have $\lim_{x \to \infty} x^{-\varepsilon} s(x)=0$. Pila constructed \cite[§7.5]{MR2068319}, for any slowly increasing function $s$, an analytic function $f$ and an unbounded sequence of positive integers $T_n$ such that $C_f(T_n) \geq s(T_n)$, which shows that theorem \ref{thm_Pila} cannot be substantially improved.

Through a slight modification of the construction of section \ref{sec:Constr} we now show that theorem \ref{thm_Pila} is sharp (in the sense above) also if we restrict our attention to functions $f:[0,1] \to [0,1]$ that induce bijections of $\mathbb{Q} \cap [0,1]$ with itself:
\begin{theorem}
Let $s(x)$ be a slowly increasing function and $\{g_n(x)\}_{n \in \mathbb{N}}$ be any countable sequence of functions $[0,1] \to [0,1]$. There exists a strictly increasing analytic function $f:[0,1] \to [0,1]$ such that
\begin{itemize}
\item $f$ restricts to a bijection $\mathbb{Q} \cap [0,1] \to \mathbb{Q} \cap [0,1]$;
\item $f$ is different from all the $g_n(x)$;

\item for infinitely many values of $T \in \mathbb{R}_{>0}$ we have
\begin{equation}\label{eq_CountingFunction}
C_f(T)=\# \left\{ x \in \mathbb{Q} \cap [0,1] \bigm\vert H(x) \leq T, H(f(x)) \leq T \right\} \geq s(T).
\end{equation}
\end{itemize}
\end{theorem}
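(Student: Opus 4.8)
The plan is to modify the basic construction of Theorem \ref{thm:Construction} so that, at a sparse but unbounded sequence of "checkpoint" stages, we force a large number of rationals of controlled height to be sent to rationals of controlled height. Concretely, since $s$ is slowly increasing, for every $\varepsilon > 0$ we have $s(T) = o(T^\varepsilon)$; in particular we can choose an increasing sequence of integers $N_1 < N_2 < \cdots$ growing fast enough that $s(N_k)$ is small compared to any fixed positive power of $N_k$. The idea is that between consecutive checkpoints we run the odd/even bookkeeping exactly as before (to guarantee the bijection and the avoidance of the $g_n$), but we interleave extra steps in which we pin down the values $f(x_j)$ on a block of low-height rationals, choosing those values to also be of low height. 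If at checkpoint $k$ we have arranged that some set $S_k \subseteq \mathbb{Q}\cap[0,1]$ with $\#S_k \geq s(T_k)$ satisfies $H(x) \leq T_k$ and $H(f(x)) \leq T_k$ for all $x \in S_k$, then $C_f(T_k) \geq s(T_k)$, which is the desired conclusion.

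The key steps, in order, are as follows. First, fix the lexicographic enumeration $x_0, x_1, \ldots$ of $\mathbb{Q}\cap[0,1]$ as in Section \ref{sec:HeightBounds}, so that by Lemma \ref{lemma:IndexGrowth} the first $n$ rationals all have height at most (roughly) $\log(2n)/2$, hence $H(x_i) \le \sqrt{2n}$ for $i < n$. Second, set up the recursion on $m$ defining $j_m$ and $\varepsilon_m$ as in the earlier proofs, with three types of steps instead of two: "odd" steps that enlarge the domain ($j_{m+1} = \min(\mathbb{N}\setminus J_m)$, also handling the $g_n$-avoidance), "even" steps that enlarge the range (hitting the next missing $y_b$), and "checkpoint" steps occurring at the stages $m = N_k$. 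Third, at a checkpoint step $m = N_k$, do nothing new to $f_m$ itself but simply record that, by construction, $f(x_j) = f_m(x_j)$ for all $j \in J_m$ and that $f_m$ has rational coefficients, so all these values are rational; then the crucial point is to have arranged, in the preceding ordinary steps, that $J_m$ contains the indices of the first $\lceil s(T_k) \rceil$ lexicographic rationals and that the corresponding values $f(x_j)$ were chosen to have height at most $\log T_k$. This last requirement is met exactly as in the height-bound theorem: at each step we have the freedom to choose the new value $z = f_{m+1}(x_{j_{m+1}})$ to be any sufficiently good rational approximation to the forced real number, and among such approximations we may take one of height $O(\exp(\text{poly}(m)\cdot X(m)))$; setting $T_k$ to be (a bit more than) the exponential of that polynomial-times-$X$ bound evaluated at $m = N_k$ gives $H(x_j), H(f(x_j)) \leq T_k$ for all the relevant $j$, while $N_k$ (hence the number of such $j$, which is $\gtrsim N_k$ since the "odd" steps exhaust $\mathbb{N}$ at linear rate) is astronomically larger than $s(T_k)$ provided the $N_k$ were chosen to grow fast enough relative to $s$.

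The main obstacle — and the step that requires genuine care — is the simultaneous control of three things across the interleaved recursion: the analyticity/monotonicity bounds (a), which demand $|\varepsilon_m| \le 4^{-m}$ no matter what; the height bounds, which demand that the value $z$ chosen at each step not be too large; and the combinatorial promise that by stage $N_k$ we have already frozen the values of $f$ on enough low-height rationals. The tension is that freezing $f$ on a rational $x_j$ forces $f(x_j) = f_{m+1}(x_j)$ to equal whatever rational $z$ we picked, and $z$ must be chosen in a window of width $\approx 4^{-m}/|\prod_k(x_j - x_{j_k})|$ around a prescribed real; the denominator here can be as small as $\exp(-\text{poly}(m)X(m))$ (this is exactly the estimate carried out in the proof of the previous theorem), so the window can be narrow and $z$ correspondingly of large height. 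The resolution is simply to \emph{define} $T_k$ after the fact as an upper bound for $\max\{H(x_j), H(f(x_j))\}$ over the frozen block at stage $N_k$ — this is a fixed (if large) number depending only on $N_k$ — and then to have chosen $N_1 < N_2 < \cdots$ at the very start sparsely enough that $s(T_k) \le N_k$ (possible because $s$ is slowly increasing and $T_k$ is at most a tower-type but still explicit function of $N_k$, so $s(T_k)$ grows slower than any power of $N_k$). Everything else is the same bookkeeping as in Theorems \ref{thm:Construction} and its height-bounded refinement, and the three bulleted conclusions follow: the bijection property from the odd/even steps, the $g_n$-avoidance from the odd steps, and $C_f(T_k) \ge s(T_k)$ from the checkpoint steps.
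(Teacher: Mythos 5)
There is a genuine gap, and it sits exactly at the point you present as ``the resolution'' of the main obstacle. In your scheme the number of rationals on which $f$ has been frozen by stage $N_k$ is only of order $N_k$ (the odd steps add one index at a time), whereas the threshold $T_k$ you define a posteriori is a fast-growing function of $N_k$: by your own accounting it must be at least of size $\exp(\mathrm{poly}(N_k)\,X(N_k))$, since that is how large the heights of the chosen values $z$ can be. The inequality you need is therefore $s(T_k)\le N_k$ with $T_k=F(N_k)$ for a fixed super-exponentially growing $F$. But slow increase of $s$ only controls $s(T)$ relative to powers of its own argument $T$, not relative to $N_k$: taking $s(T)=\log T$, which is slowly increasing, gives $s(T_k)=\log F(N_k)\ge N_k X(N_k)\gg N_k$, and no choice of a sparser sequence $N_1<N_2<\cdots$ can repair this, since $s(F(N))>N$ for \emph{every} $N$. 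Your parenthetical claim that ``$s(T_k)$ grows slower than any power of $N_k$'' is exactly where the argument breaks; sparseness of the checkpoints is irrelevant because the obstruction is the fixed functional relation between the size of the frozen block and the heights it produces.

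The paper's proof avoids this by not counting the finitely many rationals visited by the recursion at all. At stage $n$ it uses that $f_n$ is a polynomial of degree $d_n$, so $H(f_n(q))\le b_nH(q)^{d_n}$, whence the number of $q\in\mathbb{Q}\cap[0,1]$ with $H(q)\le T$ and $H(f_n(q))\le T$ is at least $c_nT^{2/d_n}$ --- a positive power of $T$, which eventually dominates $s(T)$ precisely because $s$ is slowly increasing. One then chooses $T_n$ with $c_nT_n^{2/d_n}\ge s(T_n)$ and freezes $f=f_n$ on the \emph{entire} set of rationals of height at most $T_n$ (a set vastly larger than the number of steps taken) by making all of them roots of every subsequent correction term, i.e. by taking $Q_n\supseteq\{q: H(q)\le T_n\}$ as the zero locus of $f_{m+1}-f_m$ for all $m\ge n$; surjectivity and $g_n$-avoidance are then handled by auxiliary points $z_n$ much as in your odd/even bookkeeping. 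If you wish to keep your checkpoint architecture, the fix is the same: at each checkpoint, count the rationals that the current polynomial maps into the box rather than the frozen block, and enlarge the zero set of all future corrections accordingly.
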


As before, the idea is to use an iterative construction. What changes with respect to the proof of theorem \ref{thm:Construction}, however, is that we take many more steps of type (1) than steps of type (2), as we now make precise:
\begin{proof}

Let $(x_i)_{i \in \mathbb{N}}$ and $(y_j)_{j \in \mathbb{N}}$ be two enumerations of the rationals in $[0,1]$. While $y_j$ can be arbitrary, we take $x_i$ to be given by the lexicographic ordering as in the previous section: we set $x_0=0$ and, by induction, we let $x_{i+1}$ to be the (lexicographic) minimum of the set $\mathbb{Q} \cap [0,1] \setminus \{x_0,\ldots,x_i\}$. 
Again we shall construct the function $f(x)$ as a limit of polynomials $f_n(x) \in \mathbb{Q}[x]$, where
\[
f_{n+1}(x)=f_n(x) + \varepsilon_n \prod_{q \in Q_n} (x-q)
\]
for some rational number $\varepsilon_n$ and some subset $Q_n$ of $\mathbb{Q} \cap [0,1]$. We shall require that $Q_n \subseteq Q_{n+1}$ for all $n$. We shall also construct an auxiliary sequence $z_n$ of rational numbers with the property that $z_n$ is the inverse image of $y_n$ through the limit function $f(x)$.
In the first step of the recursion we set $f_0(x)=x$. We now show how to construct $\varepsilon_n, Q_n$, and $z_n$ assuming that $f_n(x)$ has been defined.

Since $f_n(x)$ is a polynomial, say of degree $d_n$, we can find a constant $b_n$ large enough that for all rational numbers in $[0,1]$ we have
\[
H(f_n(x)) \leq b_n H(x)^{d_n}.
\]
We can assume without loss of generality that $b_n \geq 1$, and we obtain the existence of a constant $c_n>0$ such that for all $T \geq b_n$ we have
\[
\begin{aligned}
\# \{ q \in \mathbb{Q} \cap [0,1] & \bigm\vert H(q)\leq T, H(f_n(q)) \leq T \} \\
&  \geq \# \{ q \in \mathbb{Q} \cap [0,1] \bigm\vert H(q)\leq T, b_n H(q)^{d_n} \leq T \} \\
& = \# \{ q \in \mathbb{Q} \cap [0,1] \bigm\vert  H(q) \leq (T/b_n)^{1/d_n} \} \\
& \geq c_n T^{2/d_n}.
\end{aligned}
\]
Since by assumption $T^{-2/d_n}s(T)$ tends to 0 as $T$ tends to infinity, we can choose a value $T_n \in \mathbb{N}$, $T_n>b_n$, so large that $c_nT_n^{2/d_n} \geq s(T_n)$. Without loss of generality we shall also assume that the inequality $T_{n} \geq T_{n-1}+n$ holds, so that in particular the sequence $T_n$ satisfies $\lim_{n \to \infty} T_n = + \infty$. 

We now turn to the definition of the quantities $\varepsilon_n, Q_n, z_n$.
We start by setting
\[
Q_n = \{q \in \mathbb{Q} \cap [0,1] \bigm\vert H(q) \leq T_n\} \cup \{z_0,\ldots,z_{n-1}\},
\]
which obviously contains $Q_{n'}$ for all $n'<n$. Notice that for $n=0$ we assume $\{z_0,\ldots,z_{n-1}\}$ to be the empty set.
Independently of the choice of $\varepsilon_n$ or of any of the $Q_{n''}$ for $n''>n$ (as long as they contain $Q_n$), our choice of $Q_n$ implies that $f(q)=f_n(q)$ for all rationals $q$ of height at most $T_n$; in turn, this gives
\[
\begin{aligned}
\# \{ q \in \mathbb{Q} & \cap [0,1] \bigm\vert H(q) \leq T_n, H(f(q)) \leq T_n \} \\ 
& = \# \{ q \in \mathbb{Q} \cap [0,1] \bigm\vert H(q) \leq T_n, H(f_n(q)) \leq T_n \} > s(T_n),
\end{aligned}
\]
so that our limit function $f(x)$, if it exists, does indeed satisfy inequality \eqref{eq_CountingFunction} for infinitely many values of $T$.

We still need to ensure that our limit function $f(x)$ exists, is analytic, strictly increasing, induces a bijection from $\mathbb{Q} \cap [0,1]$ to itself, and is different from all the functions $g_n(x)$. This is done in the same spirit as in the proof of theorem \ref{thm:Construction}. More precisely,
\begin{enumerate}
\item by the same argument as in the proof of theorem \ref{thm:Construction}, in order to guarantee that the limit function $f(x)$ is analytic and monotonically increasing it suffices to choose the $\varepsilon_n$ to be sufficiently small (say less than $4^{-|Q_n|-1}|Q_n|^{-1}$);
\item the construction automatically implies that $f(x)$ is rational whenever $x$ is rational: indeed, for any given $x \in \mathbb{Q} \cap [0,1]$ there exists $n \in \mathbb{N}$ such that $T_n>H(x)$; it follows that $x$ belongs to $Q_m$ for all $m \geq n$, hence that $f(x)=f_{n}(x)$ is rational, because $f_n(x)$ is a polynomial with rational coefficients;
\item to ensure that the limit function $f(x)$ maps $\mathbb{Q} \cap [0,1]$ onto itself it suffices to ensure that every $y_j$ lies in the image of $f(x)$. This will be achieved by choosing the sequences $z_n, \varepsilon_n$ in such a way that $f_m(z_n)=y_n$ for all $m > n$;
\item we shall inductively choose the sequences $z_n, \varepsilon_n$ so as to ensure that $f(x)$ is distinct from all the $g_n(x)$; more precisely, at step $n$ of the construction we shall make sure that $f(x)$ is different from $g_n(x)$.
\end{enumerate}

Before proving that we can realize the construction in such a way as to satisfy constraints 1, 3 and 4 above, we make a preliminary remark.
Since the limit function $f(x)$ we are constructing is going to be a strictly increasing bijection of $[0,1]$ with itself, it will certainly be different from all the functions $g_n(x)$ that do not possess this property. Hence, replacing $(g_n)_{n \in \mathbb{N}}$ with a subsequence if necessary, we can assume that every $g_n(x)$ is a strictly increasing bijection of $[0,1]$ with itself: this slightly simplifies the argument to follow.

We now show that we can indeed achieve 1, 3 and 4.
Our construction of the sets $Q_n$ immediately implies that $f_m(z_n)=f_{n+1}(z_n)$ for all $m > n$, so in order for property 3 to be satisfied it suffices to choose $\varepsilon_n, z_n$ in such a way that
\[
y_n = f_{n+1}(z_n) = f_n(z_n) + \varepsilon_n \prod_{q \in Q_n} (z_n-q).
\]
We use this equation to define $\varepsilon_n$ in terms of $z_n$, so that we only need to choose the latter. Two cases arise:
\begin{itemize}
\item Suppose that we have $f_n(z)=y_n$ for some $z \in Q_n$. Then we have $f(z)=f_n(z)=y_n$, so in order to satisfy 3 we can simply take $z_n=z$, and in order to satisfy 1 it suffices to take $\varepsilon_n$ to be rational and smaller than $4^{-|Q_n|-1}|Q_n|^{-1}$. Hence we just need to prove that, with a suitable choice of $\varepsilon_n$, we can also make sure that 4 is satisfied.
To this end, consider the set
\[
\tilde{Q}_{n+1}=\{r \in \mathbb{Q} \cap [0,1] \bigm\vert T_n < H(r) \leq T_{n+1}\} \setminus \{z_0,\ldots, z_{n-1}\};
\]
we claim that it is nonempty. Indeed we have assumed $T_{n+1}$ to be at least $T_n+n+1$, so the cardinality of $\tilde{Q}_{n+1}$ is at least
\[
\begin{aligned}
\left|\{q \in \mathbb{Q} \cap [0,1] \right.&\left. \bigm\vert T_n < H(q) \leq T_{n+1}\}\right| - n \\ & \geq \left|\{q \in \mathbb{Q} \cap [0,1] \bigm\vert H(q) \in \{T_n+1,\ldots, T_n+n+1\} \}\right| - n \\
& \geq \left|\left\{ \frac{1}{T_n+1}, \ldots, \frac{1}{T_n+n+1} \right\}\right| -n = 1.
\end{aligned}
\] 
Let $r$ be any element of $\tilde{Q}_{n+1}$. Since $H(r) \leq T_{n+1}$, the number $r$ belongs to $Q_{m}$ for all $m \geq n+1$, hence
\[
f(r)=f_{n+1}(r) = f_n(r) + \varepsilon_n \prod_{q \in Q_n} (r-q);
\]
in order to make sure that $f(x) \neq g_n(x)$, it suffices to choose $\varepsilon_n$ in such a way that the above expression is different from $g_n(r)$.

\item If instead $y_n$ does not belong to the set $\{f_n(z) \bigm\vert z \in Q_n\}$, then, since $f_n$ is a bijection from $[0,1]$ to itself (cf.~the proof of theorem \ref{thm:Construction}), there is a $\overline{z} \in [0,1] \setminus Q_n$ such that $f_n(\overline{z})=y_n$. 
If we now choose $z_n$ to be close enough to $\overline{z}$, then (by continuity, and since the denominator does not vanish in $\overline{z} \not \in Q_n$) we can ensure that 
\[
\varepsilon_n = \frac{y_n-f_n(z_n)}{\prod_{q \in Q_n} (z_n-q)}
\]
is smaller than $4^{-|Q_n|-1}|Q_n|^{-1}$. Finally we can also make sure that $f(x) \neq g_n(x)$ by picking $z_n$ distinct from $g_n^{-1}(y_n)$; notice that this last condition makes sense, because $g_n:[0,1] \to [0,1]$ is a bijection, hence $g_n^{-1}(y_n)$ consists of precisely one point.
\end{itemize}

This concludes the iterative step of the construction, and shows that we can indeed find a function $f(x)$ as in the statement of the theorem.
\end{proof}

\noindent \textbf{Acknowledgements.} I am grateful to Umberto Zannier for bringing the problem to my attention and for his interest in this work. I also thank Marcello Mamino for useful discussions, and I acknowledge financial support from the Fondation Mathématique Jacques Hadamard (grant ANR-10-CAMP-0151-02).

\bibliographystyle{alpha}
\bibliography{Biblio}

\end{document}